\numberwithin{equation}{section}
\newtheorem{theorem}{Theorem}[section]
\newtheorem{corollary}[theorem]{Corollary}
\newtheorem{lemma}[theorem]{Lemma}
\newtheorem{Remark}[theorem]{Remark}
\date{}
\begin{document}
\title{Gradient estimates for a nonlinear parabolic equation with potential under geometric flow}
\author{Abimbola Abolarinwa\thanks{Department of Mathematics,  University of Sussex, Brighton, BN1 9QH, United Kingdom.}  \thanks{E-mail: a.abolarinwa@sussex.ac.uk}}
\maketitle
%------------------------------------------------------------------------------------------------------------
%Abstract
%------------------------------------------------------------------------------------------------------------
\begin{abstract} 
Let $(M, g)$ be an dimensional complete Riemannian manifold. In this paper we prove local Li-Yau type gradient estimates for all positive 
solutions to the following nonlinear parabolic equation 
\begin{equation*}
( \partial_t -  \Delta_g + \mathcal{R}) u( x, t) = - a u( x, t)  \log u( x, t)  
 \end{equation*} 
along the generalised geometric flow. Here $ \mathcal{R} =  \mathcal{R} (x, t)$ is a smooth potential function and $a$ is a constant. As an application we derived a global estimate and a space-time Harnack inequality.
\paragraph{Keywords:} Gradient estimates, Harnack inequalities,
parabolic equations, geometric flows.
\paragraph {2010 Mathematics Subject Classification:} 35K55, 53C21, 53C44, 58J35
\end{abstract} 
%------------------------------------------------------------------------------------------------------------
%Introduction
%------------------------------------------------------------------------------------------------------------
\section{Preliminaries and main results} 
Gradient and Harnack estimates are fundamental tools to tackle classical and modern problems in geometric analysis. These methods applied to parabolic equations were first studied by Li and Yau in their celebrated paper \cite{[LY86]}. They have been applied successfully to the setting of various geometric flows,  for more details see  \cite{[Ab1],[Ab2],[Ba13],[BCP],[CaH09],[CZ11],[HHL12],[KZh08],[Ni07]} and the references therein.  See the paper \cite{[HM10]} for similar applications. In this paper we will drive various gradient estimates for the following nonlinear parabolic equation with potential
\begin{equation}\label{eq52}
\Big(\frac{\partial }{ \partial t} -  \Delta + \mathcal{R}\Big) u( x, t) = - a u( x, t)  \log u( x, t),  
 \end{equation} 
where the symbol $\Delta = \Delta_g $ is the Laplace-Beltrami operator acting on functions in space with respect to metric $g(t)$ in time, $a$ is a constant and $\mathcal{R}: M \times  [0, T] \to \mathbf{R}$ is a $C^\infty$-function on $M$. For instance if we take $\mathcal{R}$ to be the scalar curvature of the manifold and we allow $g$ to evolve by the Ricci flow, $ \partial_t g = - 2Rc$, where $Rc$ is the Ricci curvature tensor, it then reduces to the study of gradient Ricci soliton. Taking $ f = \log u$ a standard calculation yields
\begin{align}\label{eqn2}
\Big(\frac{\partial }{ \partial t}  -  \Delta \Big) f = |\nabla f|^2 - af -  \mathcal{R}.
\end{align}
The study of gradient estimates on $M$ can be reduced  to the study of the properties of the solution $f$ of  (\ref{eqn2}) and it is related to gradient soliton equation \cite{[CaH09],[CZ11]}. Let $(M, g(t)), 0 \leq t \leq T,$ be an $n$-dimensional complete manifold on whose metric $g(t)$ evolves by the geometric flow
\begin{equation}\label{eq51}
 \frac{\partial }{\partial t} g_{ij}(x,t) = 2 h_{ij}(x, t), \hspace{1cm} (x, t) \in M \times [0, T],
 \end{equation}
 where $h_{ij}$ is a general time-dependent symmetric $(0, 2)$-tensor and $T >0$ is taken to be the maximum time of existence for the flow.  In \cite{[Ab1]} we obtain local gradient estimates for 
 \begin{equation}
\Big(\frac{\partial }{ \partial t} -  \Delta_g + \mathcal{R}\Big) u( x, t) = 0
 \end{equation} 
coupled to (\ref{eq51}). In this paper we extend the results to the case of  (\ref{eq52}) under the assumption that the geometry of the manifold remains uniformly bounded throughout the evolution. In particular, our results here can be generalised to  Ricci flow and some other geometric flows on complete manifolds. Indeed, Ricci flow  is a nice setting because of contracted second Bianchi identity that makes the divergence of Ricci tensor to be equal to the half gradient of scalar tensor.

We will impose boundedness condition on the Ricci curvature of the metric $g(t)$. We notice  that when the metric evolves by the Ricci flow, boundedness and sign assumptions are preserved  as long as the flow exists, so it follows that the metrics are uniformly equivalent. Precisely, if $ -K_1 g \leq Rc \leq K_2g$, where $g(t), t \in [0, T]$ is a Ricci flow, then 
\begin{equation}\label{eq2.3}
e^{-k_1 T}g(0) \leq g(t) \leq e^{k_2 T}g(0).
\end{equation}
To see the above bounds (\ref{eq2.3}) we consider the evolution of a vector form $|X|_g = g(X, X), X \in T_xM$. By the equation of the Ricci flow $ \partial_t g(X, X) = - 2Rc(X, X), \ 0 \leq t_1 \leq t_2 \leq T$ and by the boundedness of the Ricci curvature we have 
$ | \partial_t g(X, X) | \leq K_2 g(X, X),$
which implies (by integrating from $t_1\ to\ t_2$)
$$ \Big| \log \frac{g(t_2)(X, X)}{g(t_1)(X, X)}\Big| \leq K_2 t \Big|_{t_1}^{t_2}.$$
Taking the exponential of this estimate with $t_1 =0$ and $t_2 =T$ yields $|g(t)| \leq e^{k_2 T}g(0)$ from which the uniform boundedness of the metric follows. See \cite{[CCG$^+$P2]} and \cite{[CLN06]} for details on the theory of the Ricci flow. Similarly, if there holds boundedness assumption $ -c g \leq h \leq Cg$, the metric $g(t)$ are uniformly bounded below and above for all time $0 \leq t \leq T$ under the geometric flow (\ref{eq51}). Then, it does not matter what metric we use in the argument that follows.
   
We now state the general local space-time gradient estimate  corresponding to those of \cite[Theorem 3.2]{[Ab1]}
\begin{theorem}\label{thm54}(Local gradient estimates).
Let $( M, g(t)), t \in $ be a complete solution to the geometric flow (\ref{eq51}) in some time interval $[0, T].$ Suppose there exist some nonnegative constants $k_1, k_2,$ and $ k_3, $  such that $ R_{ij}(g)  \geq - k_1 g $ and $- k_2 g \leq h \leq k_3 g$   for all $ t \in [0,T]$.  Let $ u \in C^{2,1 } ( M \times [0,T])$  be any smooth positive solution  to  (\ref{eq52}) in the geodesic ball $  \mathcal{B}_{ 2\rho, T}$. Suppose  $ \| \nabla h \|, | \mathcal{R} |, |\nabla \mathcal{R}|  $ and $  | \Delta \mathcal{R} |$ are uniformly bounded on $M \times [0, T]$. Then, the  following estimate holds 
 \begin{align}\label{eq59}
 \left. \begin{array}{l}
\displaystyle \sup_{ x \in \mathcal{B}_{2\rho}} \Big \{ | \nabla f |^2 - \alpha f_t  -\alpha a f - \alpha \mathcal{R} \Big \} \\
\displaystyle \hspace{1cm} \leq \frac{\alpha n  p}{2t}  +    \frac{ \alpha n p }{ 4(\alpha -1) } C_8   +\frac{\alpha n }{2} ( k_2 + k_3)  \varphi \sqrt{pq}\\
\displaystyle \hspace{1cm}+ \frac{\alpha n  p}{2 } \Big \{ \frac{C_9}{\rho^2}  \Big( \frac{ \alpha p}{\alpha - 1}  +  \rho \sqrt{k_1}  + \rho^2 ( k_2 + k_3 )^2 \Big) - a  \Big \}
\end{array} \right.  
\end{align}
for  all $ (x, t) \in  \mathcal{B}_{2\rho, T }, \  t > 0$ and some constants $C_8$ and $C_9$ depending only on $n, \alpha$ and uniform bounds for  $ \| \nabla h \|, |\nabla \mathcal{R}|  $ and $  | \Delta \mathcal{R} |$,  where $ f = \log u$ and $\alpha >  1$ are given such that $ \frac{1}{p} + \frac{1}{q} = \frac{1}{\alpha}$ for any real numbers $p, q > 0.$
\end{theorem}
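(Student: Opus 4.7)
\section*{Proof proposal for Theorem \ref{thm54}}

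The plan is to follow the Li--Yau maximum principle strategy adapted to the geometric flow (\ref{eq51}), with the nonlinear logarithmic term $-af$ and the potential $\mathcal{R}$ built directly into the auxiliary quantity. Writing $f=\log u$, equation (\ref{eqn2}) gives the basic evolution $f_t = \Delta f + |\nabla f|^2 - af - \mathcal{R}$. I would set
\begin{equation*}
F \,:=\, t\bigl(|\nabla f|^{2} - \alpha f_t - \alpha a f - \alpha \mathcal{R}\bigr),
\end{equation*}
so that a bound of the form $F\le (\cdots)$ yields the desired estimate after dividing by $t$. The factor $\alpha>1$ and the split $\tfrac{1}{p}+\tfrac{1}{q}=\tfrac{1}{\alpha}$ are introduced precisely to absorb the cross terms that arise from the quadratic Bochner term and the $|h|$-contributions via Young's inequality.

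The first key computation is to evaluate $(\partial_t - \Delta)F$. I would use Bochner's formula $\Delta|\nabla f|^{2}=2|\nabla^{2}f|^{2}+2\langle\nabla\Delta f,\nabla f\rangle+2\mathrm{Rc}(\nabla f,\nabla f)$, together with the evolution of $|\nabla f|^{2}$ under (\ref{eq51}), which contributes $-2h(\nabla f,\nabla f)$, and the commutator identity for $[\partial_t,\Delta]$ acting on a scalar under a general flow $\partial_t g_{ij}=2h_{ij}$, which produces terms in $\nabla h$ and $h\cdot\nabla^{2}f$. Applying these to each summand of $F$ and using (\ref{eqn2}) to replace $\Delta f$ by $f_t - |\nabla f|^{2}+af+\mathcal{R}$ wherever $\Delta f$ appears inside $|\nabla^{2}f|^{2}$ (via $|\nabla^{2}f|^{2}\ge\tfrac{1}{n}(\Delta f)^{2}$), I expect an inequality roughly of the form
\begin{equation*}
(\partial_t-\Delta)F \;\ge\; -\frac{F}{t} + \frac{2t}{n\alpha^{2}}\bigl(F/t + (\alpha-1)|\nabla f|^{2}+\alpha a f+\alpha\mathcal{R}\bigr)^{2} - 2\langle \nabla f,\nabla F\rangle \cdot\tfrac{1}{?} \;+\; \mathcal{E},
\end{equation*}
where $\mathcal{E}$ collects the lower-order error terms in $k_1$, $k_2$, $k_3$, $\|\nabla h\|$, $|\nabla\mathcal{R}|$, $|\Delta\mathcal{R}|$, and is controlled by the uniform hypotheses of the theorem. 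The precise coefficient of the quadratic ``good'' term is what makes the estimate work, and carefully tracking it is the first careful calculation.

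To localize, I would introduce the standard Li--Yau cutoff $\varphi(x,t)=\bar\varphi(d_{g(t)}(x,x_0)/\rho)$ with $\bar\varphi\equiv 1$ on $[0,1]$, vanishing outside $[0,2]$, and $(\bar\varphi')^{2}/\bar\varphi$, $|\bar\varphi''|$ bounded. One must use the Laplacian comparison under $\mathrm{Rc}\ge -k_1 g$ to bound $\Delta\varphi$ from below, which contributes the $\sqrt{k_1}$ term in (\ref{eq59}); the evolution of $d_{g(t)}$ under (\ref{eq51}) produces a $\partial_t\varphi$ term whose size is controlled by $(k_2+k_3)$, giving the mixed $(k_2+k_3)\sqrt{pq}$ contribution via Young's inequality with exponents $p,q$. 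I would then study the function $\varphi F$: at a point $(x_1,t_1)$ where $\varphi F$ attains its positive maximum on $\mathcal{B}_{2\rho,T}$, the standard conditions $\nabla(\varphi F)=0$, $\Delta(\varphi F)\le 0$, $\partial_t(\varphi F)\ge 0$ hold, and $\nabla F = -F\nabla\varphi/\varphi$ at that point.

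The main step, and the one I expect to be the most delicate, is then to multiply the differential inequality for $(\partial_t-\Delta)F$ by $\varphi$, substitute the maximum-principle identities, and use Young's inequality repeatedly to absorb each term containing $|\nabla f|$, $f_t$, $\mathcal{R}$, $\nabla\mathcal{R}$ or $h$ into $\tfrac{1}{2}\cdot(\text{quadratic good term})$. The exponent split $\tfrac{1}{p}+\tfrac{1}{q}=\tfrac{1}{\alpha}$ is what allows the coefficient $|\nabla f|^{2}$ in $F$ to be kept at $1$ while absorbing the cross term $(k_2+k_3)|\nabla f|^{2}$ (producing the factor $\tfrac{\alpha n}{2}(k_2+k_3)\varphi\sqrt{pq}$ in the bound), and the factor $\tfrac{\alpha}{\alpha-1}$ appears whenever the squared content of $F$ is completed. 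After bounding $\varphi^{2}F^{2}(x_1,t_1)$ by a quadratic polynomial in $(\varphi F)(x_1,t_1)$, solving the resulting quadratic inequality in $\varphi F$ and using $\sqrt{A+B}\le\sqrt A+\sqrt B$ to separate the contributions yields precisely (\ref{eq59}) after restoring $\varphi\equiv 1$ on $\mathcal{B}_\rho$ and dividing by $t$. The hard part will be bookkeeping, namely keeping all error constants grouped into the single constants $C_8$ (time-independent bound coming from $|\nabla\mathcal{R}|,|\Delta\mathcal{R}|,\|\nabla h\|$) and $C_9$ (geometric cutoff constant), and making sure the final inequality is homogeneous in $\rho^{-2}$, $\sqrt{k_1}/\rho$, and $(k_2+k_3)^{2}$ as displayed.
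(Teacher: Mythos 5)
Your strategy coincides with the paper's own proof: the same auxiliary quantity $F = t\bigl(|\nabla f|^2 - \alpha f_t - \alpha a f - \alpha \mathcal{R}\bigr)$, a Bochner-type differential inequality for $F$ under the flow (the paper's Lemma \ref{lem53}), localisation by a Li--Yau cutoff controlled through Laplacian comparison and the evolution of the distance under $\partial_t g = 2h$, the maximum-principle identities at a positive maximum of $\varphi F$, and a quadratic inequality in $\varphi F$ resolved by the root formula. So there is no methodological divergence; the issue is whether your sketch of the central step would actually run.

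There is a genuine gap precisely there: the differential inequality you only ``expect'' is the whole content of Lemma \ref{lem53}, and as written it is not usable. First, the operator direction is reversed. The maximum-principle step needs a lower bound for $(\Delta - \partial_t)F$, because at the maximum one has $0 \ge (\Delta - \partial_t)(\varphi F) = 2\nabla\varphi\cdot\nabla F + \varphi(\Delta-\partial_t)F + F(\Delta-\partial_t)\varphi$, and it is the lower bound on $\varphi(\Delta-\partial_t)F$ containing the positive term $\tfrac{2\alpha t}{np}(\Delta f)^2$ that turns this into the quadratic inequality. Your stated lower bound for $(\partial_t-\Delta)F$ combines with $(\partial_t-\Delta)(\varphi F)\ge 0$ at the maximum to give two inequalities pointing the same way and no conclusion. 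Second, the content of the squared ``good'' term is off: from $\Delta f = f_t + af + \mathcal{R} - |\nabla f|^2$ one gets $-\alpha\Delta f = \tfrac{F}{t} + (\alpha-1)|\nabla f|^2$, so the square produced by $f_{ij}^2 \ge \tfrac1n(\Delta f)^2$ (after the split $2f_{ij}^2 - 2\alpha h_{ij}f_{ij} = \tfrac{2\alpha}{p}f_{ij}^2 + 2(\tfrac{\alpha}{q}f_{ij}^2 - h_{ij}f_{ij})$, which is where $p,q$ and the $\tfrac{\alpha n q}{2}(k_2+k_3)^2$ error actually enter) involves $\tfrac{F}{t} + (\alpha-1)|\nabla f|^2$ only; the extra $\alpha a f + \alpha\mathcal{R}$ inside your square should not be there, and keeping them would break the later substitution $y=\varphi|\nabla f|^2$, $z=\varphi(f_t+af+\mathcal{R})$, $\varphi F = t_0(y-\alpha z)$ on which the quadratic step rests. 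Finally, the term $+aF$ coming from $-\alpha a t\,\Delta f$ is missing from your bookkeeping, yet it is what produces the $-a$ inside the braces of (\ref{eq59}). Until the analogue of Lemma \ref{lem53} is actually derived (evolution of $|\nabla f|^2$ and $\Delta f$ under $\partial_t g=2h$, with the $\mathrm{div}\,h$ and $\nabla H$ terms controlled by $\|\nabla h\|$), the proposal is a correct plan rather than a proof.
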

As an application of the above result we obtain global gradient estimates (Cf. Remark \ref{rmk31}, equation (\ref{eq512})). We then apply the global estimates obtained to derive classical Harnack inequalities by integrating along a space-time path joining any two points in $M$. 
 
The rest of the paper  is as follows; in the next section we state and prove an important lemma that we will apply to prove the theorem above. The lsat section is devoted to the descriptions of the cut-off function needed in the proof,  detail of the proof of Theorem \ref{thm54} itself and its application to Harnack inequality (Cf. Corollary \ref{corN5}).

\section{Important Lemma}
We first proof  the following technical lemma which is a generalization of  Lemma 3.1 in \cite{[Ab1]}. It is originally proved for heat equation on static metric by Li and Yau \cite{[LY86]}. This is very crucial to derivation of both local and global estimate of Li-Yau type. 
\begin{lemma}\label{lem53}
Let $( M, g(t))$ be a complete solution to the generalized flow (\ref{eq51}) in some time interval $[0, T].$ Suppose there exist some nonnegative constants $k_1, k_2, k_3, $ and $k_4$ such that $ R_{ij}(g)  \geq - k_1 g $, $- k_2 g \leq h \leq k_3 g$  and $| \nabla h | \leq k_4$  for all $ t \in [0,T]$. For any smooth positive solution $ u \in C^{2,1 } ( M \times [0,T])$ to  equation  (\ref{eq52}) in the geodesic ball $  \mathcal{B}_{ 2\rho, T}$, it holds that 
\begin{equation}\label{eq55}
\left. \begin{array}{l}
\displaystyle ( \Delta  - \partial_t) F  \geq - 2 \langle \nabla f, \nabla F  \rangle -   \frac{2 \alpha t }{ np} (\Delta f )^2  - \frac{F}{t}  - 3 \alpha n^{\frac{1}{2}} k_4 t |\nabla f|  \\
 \displaystyle  \hspace{2cm} - \Big( ( \alpha - 1)(2 k_3 + a) + 2k_1 \Big) t| \nabla f |^2  - \alpha t \Delta \mathcal{R}  \\
\displaystyle \hspace{2cm} - 2 t ( \alpha -1) \langle \nabla f , \nabla R \rangle   - \frac{\alpha n q }{2}  t ( k_2 + k_3)^2   + aF,
\end{array} \right.
\end{equation}
 where $ f = \log u, F  = t  (| \nabla f |^2 - \alpha \partial_t f  - \alpha \mathcal{R} - \alpha a f )$ and $\alpha \geq 1$ are given such that $ \frac{1}{p} + \frac{1}{q} = \frac{1}{\alpha}$ for any real numbers $p, q > 0.$
\end{lemma}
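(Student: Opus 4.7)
The plan is to compute $(\Delta-\partial_t)F$ via the Leibniz rule on $F=tG$ with
$G := |\nabla f|^2 - \alpha f_t - \alpha\mathcal{R} - \alpha a f$, apply Bochner's formula and the commutator $[\partial_t,\Delta]$ on the evolving metric to each ingredient of $G$, and finally balance the resulting terms using Young's inequality parametrised by $(p,q)$. Since $(\Delta-\partial_t)(tG) = t(\Delta-\partial_t)G - G$ and $-G = -F/t$, the $-F/t$ summand in (\ref{eq55}) appears immediately, leaving only $t(\Delta-\partial_t)G$ to be analysed.

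For the gradient piece, combining Bochner's identity with the metric evolution $\partial_t g^{ij}=-2h^{ij}$ gives
\[
(\Delta-\partial_t)|\nabla f|^2 = 2|\nabla^2 f|^2 + 2\langle\nabla f,\nabla(\Delta f - f_t)\rangle + 2\mathrm{Ric}(\nabla f,\nabla f) + 2h(\nabla f,\nabla f).
\]
For $f_t$, the standard commutator identity on an evolving metric,
\[
\partial_t\Delta f - \Delta f_t = -2\langle h,\nabla^2 f\rangle - \langle 2\,\mathrm{div}\,h - \nabla\mathrm{tr}\,h,\nabla f\rangle,
\]
converts $(\Delta-\partial_t)f_t$ into Hessian-$h$ and $\nabla h$-$\nabla f$ contributions. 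The log-source term reduces through equation (\ref{eqn2}) to $(\Delta-\partial_t)(af) = -a(|\nabla f|^2 - af - \mathcal{R})$, which is precisely what regenerates the $aF$ term after multiplication by $t$. Using $\nabla(\Delta f - f_t) = -\nabla|\nabla f|^2 + a\nabla f + \nabla\mathcal{R}$ then converts the gradient cross-term into $-2\langle\nabla f,\nabla|\nabla f|^2\rangle + 2a|\nabla f|^2 + 2\langle\nabla f,\nabla\mathcal{R}\rangle$; after multiplication by $t$ and reassembly of $F$, the first piece becomes the $-2\langle\nabla f,\nabla F\rangle$ summand, while the $\nabla\mathcal{R}$ piece combines with the $(\alpha-1)$ factor coming from $G$ to yield the stated $-2t(\alpha-1)\langle\nabla f,\nabla\mathcal{R}\rangle$.

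The last step is to extract a usable $(\Delta f)^2$ term. I would apply $|\nabla^2 f|^2 \geq (\Delta f)^2/n$ and split $1/\alpha = 1/p + 1/q$ multiplicatively, allocating the $\alpha/p$ portion of the Hessian square to produce the coefficient $2\alpha t/(np)$ in front of $(\Delta f)^2$, and using the remaining $\alpha/q$ portion with Young's inequality to dominate $2\alpha\langle h,\nabla^2 f\rangle$; the bound $-k_2 g\leq h\leq k_3 g$ then forces out the residual $\frac{\alpha nq}{2}(k_2+k_3)^2$. The Ricci hypothesis $\mathrm{Ric}\geq -k_1 g$ combined with the $h$-bound contributes the coefficient $(\alpha-1)(2k_3+a)+2k_1$ on $|\nabla f|^2$, and a Cauchy-Schwarz estimate on the $\nabla h$-in-$\nabla f$ term together with $|\nabla h|\leq k_4$ produces the $-3\alpha n^{1/2}k_4 t|\nabla f|$ contribution.

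The main obstacle will be the meticulous bookkeeping required to match every coefficient of (\ref{eq55}) exactly: tracking which $\alpha$-factors originate in the $-\alpha f_t$, $-\alpha\mathcal{R}$ and $-\alpha af$ pieces of $G$, verifying that the Young-splitting governed by $1/p + 1/q = 1/\alpha$ yields precisely the weights $2\alpha/(np)$ and $\alpha nq/2$ rather than an off-by-one variant, and ensuring that the factor $n^{1/2}$ in the $\nabla h$ term emerges from tracing $\mathrm{div}\,h$ and $\nabla\mathrm{tr}\,h$ against $\nabla f$ with exactly the constant $3$ stated in the lemma.
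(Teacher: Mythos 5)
Your plan is correct and follows essentially the same route as the paper: the paper likewise computes $(\Delta-\partial_t)F$ using the evolving-metric Bochner identity and the commutator formula for $(\Delta f)_t$ (its Lemma 2.1 of \cite{[Ab1]}), substitutes $\Delta f = f_t - |\nabla f|^2 + \mathcal{R} + af$ to regroup the cross terms into $-2\langle\nabla f,\nabla F\rangle$ and $aF$, and then performs exactly your $1/p+1/q=1/\alpha$ splitting of $2|f_{ij}|^2 - 2\alpha h_{ij}f_{ij}$ together with $|f_{ij}|^2\geq(\Delta f)^2/n$, the bounds on $Rc$, $h$, $\nabla h$, and the trace estimate giving $3\alpha n^{1/2}k_4|\nabla f|$. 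The only difference is organizational (the paper expands $\Delta F$ and $F_t$ separately rather than invoking a parabolic Bochner formula directly), so your identities and coefficient bookkeeping reproduce the paper's argument.
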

\begin{proof}
Recall from \cite[Lemma 2.1]{[Ab1]} the following evolutions under the flow
\begin{align}\label{eqn34} 
  \displaystyle ( | \nabla f|^2)_t = - 2 h_{ij} f_i f_j + 2 f_i f_{ti} 
  \end{align}
  \begin{align} \label{eqn35} 
   \displaystyle &   ( \Delta f )_t = \Delta (f_t)  - 2 h_{ij}f_{ij} - 2 \langle \ div \ h , \nabla f \rangle +   \langle \nabla H  , \nabla f \rangle,
\end{align}
where $div$ is the divergence operator i.e., $(div\ h)_k = g^{ij} \nabla_i h_{jk}$.  Notice also that 
$ f_t = \Delta f + | \nabla f |^2 - \mathcal{R} - a f $. Taking covariant derivative of $F$ we have 
$$ F_i = t ( 2 f_j f_{ji} - \alpha f_{ti} - \alpha \mathcal{R}_i  - \alpha a f_i) $$
 and with  Bochner-Weitzenb\"ock's formula  
 \begin{equation}
\Delta | \nabla f|^2 =  2  |f_{ij}|^2 +  2  f_j f_{jji} + 2  R_{ij} f_i f_j
\end{equation}
we have
$$ \Delta F = \sum_{i =1}^n F_{ii} = t \Big( 2 f_{ij}^2 + 2 f_j f_{jji} + 2 R_{ij} f_{ij} -  \alpha \Delta (f_t)  - \alpha  \Delta \mathcal{R} - \alpha a \Delta f \Big).$$ 
Using  (\ref{eqn35})  we get 
\begin{align*} 
 \Delta F = &  t \Big[ 2 f_{ij}^2 + 2 f_j f_{jji} + 2 R_{ij} f_i f_j - \alpha( \Delta f)_t - 2 \alpha h_{ij} f_{ij}  \\
 & \hspace{1.5cm} - 2 \alpha (div\ h)_i f_j + \alpha H_i f_j - \alpha \Delta \mathcal{R} - \alpha a \Delta f \Big]  \\
  & = t \Big( 2 f_{ij}^2  - 2 \alpha h_{ij} f_{ij} \Big) +  2t \langle \nabla f , \nabla ( f_t + af + \mathcal{R} - | \nabla f |^2 ) \rangle \\
  &   \hspace{1.5cm} - \alpha t ( f_t + af + \mathcal{R} - | \nabla f |^2 )_t  - 2 \alpha  t (div\ h)_i f_j \\
  & \hspace{1.5cm} + \alpha t H_i f_j - \alpha t \Delta \mathcal{R} - \alpha a  t \Delta f  + 2 t R_{ij} f_{ij}.
 \end{align*} 
 Notice that 
\begin{equation}\label{eq56}
\left. \begin{array}{l}
\displaystyle - \alpha t ( f_t + af + \mathcal{R} - | \nabla f |^2 )_t\\
\displaystyle  \hspace{2cm} =  t  ( \alpha  | \nabla f |^2  -  \alpha  f_t  -  \alpha  a f  -  \alpha t\mathcal{R} )_t \\
\displaystyle  \hspace{2cm}  = t \Big( | \nabla f |^2  -  \alpha  f_t -  \alpha  a f -  \alpha  \mathcal{R}  + ( \alpha - 1)  | \nabla f |^2   \Big)_t \\ 
\displaystyle \hspace{2cm} = t  \Big(\frac{F}{t}  + ( \alpha - 1)  | \nabla f |^2   \Big)_t \\ 
\displaystyle \hspace{2cm} = F_t  -  \frac{F}{t}  + t ( \alpha - 1) ( | \nabla f |^2 )_t ,
\end{array} \right.
\end{equation}
 \begin{equation}\label{eq57}
\left. \begin{array}{l}
 \displaystyle 2t \langle \nabla f , \nabla ( f_t + af +  \mathcal{R} - | \nabla f |^2 ) \rangle + t( \alpha - 1) ( | \nabla f |^2 )_t  \\ 
\displaystyle  \hspace{1.cm}  = 2t \langle \nabla f , \nabla ( f_t + af + \mathcal{R} - | \nabla f |^2 ) \rangle  + 2 t ( \alpha - 1)  \langle \nabla f, \nabla (f_t) \rangle \\ 
\displaystyle  \hspace{3cm}  - 2 t ( \alpha - 1)  h_{ij} f_i f_j  \\  
\displaystyle \hspace{1.cm} = 2t \langle \nabla f , \nabla (  \alpha f_t + af +  \mathcal{R} - | \nabla f |^2 ) \rangle - 2 t ( \alpha - 1)  h_{ij} f_i f_j  \\  
\displaystyle \hspace{1.cm} =  - 2t \langle \nabla f , \nabla \Big( \frac{F}{t}  +( \alpha - 1)  ( af +  \mathcal{R})  \Big) \rangle - 2 t ( \alpha - 1)  h_{ij} f_i f_j  \\ 
 \displaystyle \hspace{1.cm} =  - 2 \langle \nabla f , \nabla F  \rangle  - 2 t ( \alpha - 1)  \langle \nabla f , \nabla \mathcal{R}  \rangle  - 2 t ( \alpha - 1)  a | \nabla f|^2 \\
 \displaystyle  \hspace{3cm} - 2 t ( \alpha - 1)  h_{ij} f_i f_j
\end{array} \right.
\end{equation}
and
\begin{equation}\label{eqn38}
\left. \begin{array}{l}
\displaystyle - \alpha a t \Delta f  = a t  ( \alpha  | \nabla f |^2  -  \alpha  f_t  -  \alpha  a f  -  \alpha \mathcal{R} ) \\
\displaystyle  \hspace{1.4cm}  = aF   + t ( \alpha - 1) ( | \nabla f |^2 ).
\end{array} \right.
\end{equation}
With (\ref{eq56})--(\ref{eqn38}) we get 
\begin{equation}\label{eqn39}
\left. \begin{array}{l}
\displaystyle  \Delta F - F_t \\
 \displaystyle   \hspace{1cm} = t \Big( 2 f_{ij}^2  - 2 \alpha h_{ij} f_{ij} \Big)  - 2 \langle \nabla f , \nabla F  \rangle - \frac{F}{t} - 2 t ( \alpha - 1)  h_{ij} f_i f_j  \\ \ \\
\displaystyle   \hspace{1cm}  - \alpha t ( 2 (div\ h)_i f_j - H_i f_j )  - 2 t ( \alpha - 1)  \langle \nabla f , \nabla \mathcal{R}  \rangle - \alpha t \Delta \mathcal{R} \\ \ \\
\displaystyle   \hspace{1cm}  + 2 t R_{ij} f_{ij} - 2 t ( \alpha - 1)  a | \nabla f|^2 +  t ( \alpha - 1)  a | \nabla f|^2 + a F.
\end{array} \right.
\end{equation}
We now choose any two real numbers $ p, q > 0$ such that  $ \frac{1}{p} + \frac{1}{q} = \frac{1}{\alpha}$ so that we can write 
   \begin{align*}  
   2 f_{ij}^2 - 2 \alpha h_{ij} f_{ij}  & = \frac{2 \alpha }{p} f_{ij}^2 + 2   \Big(\frac{ \alpha}{q} f_{ij}^2- h_{ij}  f_{ij} \Big) \\
   & \geq \frac{2 \alpha }{p} f_{ij}^2  - \frac { \alpha q}{2}  h_{ij}^2,
   \end{align*}
   where we have used completing the square method to arrive at the last inequality. Also by Cauchy-Schwarz inequality we have $  f_{ij}^2 \geq \frac{1}{n}  ( \Delta f)^2.$
   We can also write the boundedness condition on $h_{ij}$  as $ -( k_2 + k_3) g \leq h_{ij} \leq ( k_2 + k_3) g$ so that 
   $$ \sup_M  |h_{ij}|^2 \leq n ( k_2 + k_3)^2$$
   since $h_{ij}$ is a symmetric tensor. Therefore we have 
 \begin{equation}\label{eq58}
 t \Big( 2 f_{ij}^2  - 2 \alpha h_{ij} f_{ij} \Big) \geq \frac{2 \alpha t }{ np} ( \Delta f )^2 - \frac{\alpha n q}{2}t (k_2 + k_3)^2. 
 \end{equation}
 Notice also that
 \begin{align*} 
 \alpha t \Big( 2 (div\ h)_i f_j - H_i f_j \Big)  &= 2 \alpha t \Big( div \ h - \frac{1}{2} \nabla H \Big) f_j \\
 & = 2 \alpha t \Big( g^{ij} \nabla_i h_{jl} - \frac{1}{2} g^{ij} \nabla_i h_{ij} \Big) \nabla_j f \\
  &\leq  2 \alpha t \Big( \frac{3}{2} |g| | \nabla h| \Big) | \nabla f| \\
 & \leq 3  \alpha t n^{\frac{1}{2} } k_4 | \nabla f| .
 \end{align*} 
Putting together the last inequality, (\ref{eq58}) and (\ref{eqn39}) with the assumption  that $R_{ij} \geq -k_1g$, we arrive at  
 \begin{align*}
 ( \Delta - \partial_t ) F  & \geq  - 2 \langle \nabla f , \nabla F  \rangle  -   \frac{2 \alpha t }{ np} (\Delta f )^2  - \frac{F}{t} + aF - 2 t ( \alpha - 1) k_3 | \nabla f |^2 \\
 &  - 2 t k_1 | \nabla f|^2  - 3  \alpha t n^{\frac{1}{2} } k_4 | \nabla f| - \frac{\alpha n q}{2} t (k_2 + k_3)^2  - \alpha t \Delta \mathcal{R}\\
 &  - 2 t ( \alpha - 1)  \langle \nabla f , \nabla \mathcal{R}  \rangle -  t ( \alpha - 1) a |\nabla f|^2.
 \end{align*}
 Our calculation is valid in the ball $ \mathcal{B}_{2 \rho, T}$. Hence the desired claim follows.
\end{proof}
%%%%%%%%%%%%%%%%%%%%%%%%%%%%%%%
%%%%%%%%%%%%%%%%%%%%%%%%%%%%%%
\section{Proof of Theorem \ref{thm54}}
In order to prove Theorem \ref{thm54} we will make use of the lemma above and the assumptions that  the sectional curvature,  $ \| \nabla h \|, | \mathcal{R} |, |\nabla \mathcal{R}|  $ and $  | \Delta \mathcal{R} |$ are uniformly bounded on $M \times [0, T]$.
Then we write equation (\ref{eq55}) as 
\begin{equation}\label{eqp	1}
\left. \begin{array}{l}
\displaystyle ( \Delta  - \partial_t) F  \geq - 2 \langle \nabla f, \nabla F  \rangle -   \frac{2 \alpha t }{ np} (\Delta f )^2  - \frac{F}{t}   + aF - C_1 t | \nabla f |^2     \\
 \displaystyle  \hspace{2cm} -  C_2 t | \nabla f |    - 2k_1 t | \nabla f |^2  - \frac{\alpha n q }{2}  t ( k_2 + k_3)^2 ,
\end{array} \right.
\end{equation}
where constants $C_1 >0$ depends on $\alpha$, \ $ \max \{a, 0 \}$,  $\sup| h|$ and $ \|\nabla h\|$, and $C_2 >0$ depends on $\alpha, \ n$ and the space-time bounds of $ \|\nabla h\|, |\nabla \mathcal{R}|,   | \Delta \mathcal{R} |$. We have used the following inequality 
$$ 3 \alpha n^{\frac{1}{2}} k_4 t | \nabla f | \leq 2t k_4    | \nabla f |^2 + 2 \alpha^2 nt k_4. $$ 
Furthermore, by using 
$$ - C_2 t | \nabla f |  \geq - \delta^{-1} t C_2^2 - \delta t  | \nabla f |^2 $$
for any number $\delta > 0$, we have 
\begin{equation}\label{eqp2}
\left. \begin{array}{l}
\displaystyle ( \Delta  - \partial_t) F  \geq - 2 \langle \nabla f, \nabla F  \rangle -   \frac{2 \alpha t }{ np} (\Delta f )^2  - \frac{F}{t}   + aF - C_3 t | \nabla f |^2     \\
 \displaystyle  \hspace{2cm} -  C_4 t   - 2k_1 t | \nabla f |^2  - \frac{\alpha n q }{2}  t ( k_2 + k_3)^2 ,
\end{array} \right.
\end{equation}
where $C_3 >0$ depends on $C_1$ and $\delta$ and $C_4$ depends on $C_2$ and $\delta$.
  
   \subsection*{Estimating the cut-off function}
A natural function
 that will be defined on $M$ is the distance function from a given point.  Namely, let $ y \in M$ and define $d(x, y)$ for all $ x\in M$,  where $d(\cdot, \cdot )$ is the geodesic distance. Note that $d$ is  everywhere continuous except on the cut locus of
   $y$ and on the point where $x$ and $y$ coincide. It is then easy to see that 
$ | \nabla d | = g^{ij} \partial_i d \partial_j d = 1$ on $M  \setminus \{ \{ y \}  \cup cut(y) \} .$
Let $d(x, y, t)$ be the geodesic distance between $x$ and $y$ with respect to the metric $g(t)$, 
 we  define a smooth cut-off function $ \varphi(x, t)$  with support in the geodesic ball 
$$ \mathcal{B}_{ 2\rho, T} := \{ ( x, t) \in M \times (0, T] : d(x, y, t) \leq 2\rho \}.$$
For any $C^2$-function $ \psi( s)$ on $[0, + \infty )$ with $ \psi(s) = 1$ on $ 0 \leq s \leq 1$ and $ \psi(s) = 0$ on $ 2 \leq s \leq + \infty$
such that 
$ - C_5 \leq \psi'(s) \leq 0,  \ - C_6 \leq  \psi''(s) \leq  C_6$  and  $- C_6  \psi  \leq | \psi'|^2 \leq C_6 \psi,$
where $C_5, C_6$ are absolute constants. Let $\rho \geq 1$ and define a smooth function 
$$ \varphi(x, t) = \psi \Big( \frac{d( x, p, t)}{\rho } \Big)  \ \ \ \ and  \ \  \ \varphi \Big|_{ \mathcal{B}_{ 2\rho, T} } =1 .$$
We will apply maximum principle and invoke Calabi's trick to assume everywhere smoothness of $ \varphi(x, t)$ since $ \psi(s)$ is in general Lipschitz (see the argument of Li-Yau in \cite{[LY86]}).
We need Laplacian comparison theorem \cite{[SY94]} to do some calculation on $\varphi(x, t)$.  Let $M$ be a complete $n$-dimensional Riemannian manifold whose Ricci curvature is  bounded from below by $ Rc \geq -(n-1) k_1 $ for some constant $ k_1 \in \mathbb{R}$, then the Laplacian  of the distance function satisfies 
$$\Delta d(x, y) \leq (n-1)\sqrt{|k_1|} \coth ( \sqrt{|k_1|} \rho),  \ \ \forall x \in M\ \ d(x, y) \geq \rho.$$
We need the following calculation
\begin{align*}
 \frac{| \nabla \varphi |^2}{ \varphi} = \frac{| \psi' |^2 \cdot | \nabla d|^2 }{\rho^2 \psi} \leq \frac{C_6}{\rho^2}
  \end{align*}
and by the Laplacian comparison theorem we have
 \begin{align*}
 \Delta \varphi = \frac{ \psi' \Delta d}{\rho} + \frac{ \psi'' | \nabla d |^2}{\rho^2} & \geq -\frac{C_6}{\rho}(n-1) \sqrt{k_1} \coth( \sqrt{k_1} \rho ) - \frac{C_6}{\rho^2}\\
  \displaystyle &   \geq -\frac{C_6\sqrt{k_1}}{\rho}  - \frac{ C_6}{\rho^2}. 
  \end{align*}
 Next is to estimate time derivative of $\varphi$: consider a fixed smooth path $\gamma :[a, b] \to M$ whose length at time $t$ is given by $d(\gamma) = \int_a^b |\gamma'(t)|_{g(t)} dr$, where $r$ is the arc length. Differentiating we get 
 $$ \frac{\partial}{\partial t} (d(\gamma)) = \frac{1}{2} \int_a^b \Big|\gamma'(t) \Big|^{-1}_{g(t)} \frac{\partial g}{\partial t} \Big(\gamma'(t), \gamma'(t)\Big) dr = \int_\gamma h_{ij}(X, X) dr,$$
 where $X$ is the unit tangent vector to the path $\gamma$.  Now 
\begin{align*}
\frac{\partial}{\partial t} \varphi & = \psi ' \frac{1}{\rho} \frac{d}{dt} (d(t)) = \psi' \frac{1}{\rho} \int_\gamma h_{ij}(X, X) dr \\
& \leq \frac{ \sqrt{C_6} \psi^{\frac{1}{2}}}{\rho} (k_2 +k_3)^2 \int_\gamma dr = \sqrt{C_6} (k_2 +k_3)^2.
  \end{align*} 
Hence  we denote
$$ ( \Delta - \partial_t ) \varphi \geq  \Big( - \frac{C_6 \sqrt{k_1}}{\rho} - \frac{C_6 }{\rho^2}  - \sqrt{C_6} (k_2 +k_3)^2\Big) =: C_7. $$
which will be used in the proof of our result.

\begin{proof} ({\bf of Theorem \ref{thm54}}).
Using the same notations as in the last lemma, we write $ \widetilde{K} = ( k_2 + k_3)^2$. For a fixed $ \tau \in (0, T]$ and a smooth cut-off function $ \varphi(x,t)$ (chosen as before), we now estimate the inequality (\ref{eqp2}) at the point $( x_0, t_0) \in \mathcal{B}_{2\rho, T } \subset ( M \times [0,T])$ such that $ d( x, x_0, t) < 2 \rho$.
The argument follows;
 \begin{equation}\label{eq38}
 (\Delta - \partial_t )( \varphi F) =  2  \nabla \varphi \nabla F  + \varphi (\Delta - \partial_t )F+  F (\Delta - \partial_t ) \varphi. 
  \end{equation}
  Suppose $ ( \varphi F)$ attains its maximum value at $( x_0, t_0) \in M \times [0, T]$, for $t_0 >0$.  
  If $ ( \varphi F)(x_0, t_0) \leq 0$ for any $\rho \geq 1$, then the result holds trivially in $M \times [0, T]$ and we are done. Hence we may assume without loss of generality that there exists $ ( \varphi F)(x_0, t_0) > 0$. Then since $ ( \varphi F)(x,0) = 0$ for all $x \in M$, we have by the maximum principle that
  \begin{equation}\label{eq39}
  \nabla ( \varphi F) ( x_0, t_0) = 0, \ \ \ \frac{\partial}{\partial t}( \varphi F) ( x_0, t_0) \geq 0, \ \ \  \Delta ( \varphi F) ( x_0, t_0) \leq 0,
  \end{equation} 
  where the function $( \varphi F)$ is being considered with support on $ \mathcal{B}_{2\rho} \times [0, T]$ and we have assumed that $ ( \varphi F) ( x_0, t_0) >0$ for $ t_0 >0$. By (\ref{eq39}) we notice that
  $$ (\Delta - \partial_t )(  \varphi F)( x_0, t_0) \leq 0.$$
 Hence we have by using the inequality (\ref{eqp2}) and equation (\ref{eq38}):
 \begin{align}\label{eq3.10}
  \left. \begin{array}{l}
 \displaystyle 0 \geq ( \Delta - \partial_t ) ( \varphi F) \\
   \displaystyle \hspace{0.5cm} \geq  2  \nabla \varphi \nabla F  + C_7 F +   \  \varphi \Big \{  \frac{ 2 \alpha}{np} t_0 (\Delta f )^2  - 2 \langle \nabla f, \nabla F  \rangle  - \frac{F}{t_0} + a F\\   
  \displaystyle  \hspace{0.5cm}  - C_3 t_0 | \nabla f |^2 -  C_4t_0    - 2k_1 t_0 | \nabla f |^2 - \frac{\alpha n q }{2}  t_0 ( k_2 + k_3)^2  \Big \}.
  \end{array} \right. 
  \end{align}
The above inequality  holds in the part of  $ \mathcal{B}_{2\rho, T }$ where $ \varphi(x, t)$ is strictly positive ($ 0 < \varphi(x, t) \leq 1$ ). Notice that since $\nabla (\varphi F) = 0$, the product rule tells us that we can always replace $-F\nabla \varphi$ with $\varphi \nabla F$ at the maximum point $(x_0, t_0)$. Indeed, the following identities hold
  \begin{align*}
   2 \nabla  \varphi \nabla F =  2 \varphi \frac{\nabla \varphi}{\varphi} \nabla F = -2 F \frac{C_6}{\rho^2}  F
   \end{align*}
   \begin{align*}
   - 2 \varphi \nabla F \cdot \nabla f = 2 F \nabla \varphi \cdot \nabla f  =  2 F | \nabla f| \varphi \frac{| \nabla \varphi |}{\varphi } \geq  -2 \frac{ \sqrt{C_6}}{\rho} | \nabla f|  \varphi^{\frac{1}{2}} F
   \end{align*}
Multiplying (\ref{eq3.10})  by $( t_0 \varphi)$,  after some simple calculations involving the last two identities at the maximum point we  get 
   \begin{align*}
    0 \geq & -2 t_0 \frac{C_6}{\rho^2} \varphi  F -  \varphi^2 F - 2 t_0 \frac{ \sqrt{C_6}}{\rho} | \nabla f|  \varphi^{\frac{3}{2}} F + C_7 t_0 \varphi F  + a \varphi^2 t_0 F \\
    &+  \varphi  \frac{ 2 t_0^2}{n}   \Big( \frac{ \alpha}{p} ( \varphi | \nabla f |^2 - \varphi ( f_t + a f + \varphi \mathcal{R} ) \Big)^2   -     \frac{ \alpha  n q }{2} t^2_0 \widetilde{K} \varphi^2 \\
    &  - C_3 t^2_0  \varphi^2| \nabla f |^2 -  C_4 \varphi^2t^2_0    - 2k_1 t^2_0  \varphi^2| \nabla f |^2 \\ 
 \geq & -2 t_0 \frac{C_6}{\rho^2} \varphi  F -  \varphi^2 F - 2 t_0 \frac{ \sqrt{C_6}}{\rho} | \nabla f|  \varphi^{\frac{3}{2}} F + C_7 t_0 \varphi F  + a \varphi^2 t_0 F \\
    &+  \varphi  \frac{ 2 t_0^2}{n}   \Big( \frac{ \alpha}{p} ( \varphi | \nabla f |^2 - \varphi ( f_t + a f + \varphi \mathcal{R} ) \Big)^2   -     \frac{ \alpha  n q }{2} t^2_0 \widetilde{K} \varphi^2 \\
    &  - C_8 t^2_0  \varphi^2| \nabla f |^2,
 \end{align*}
where $C_8$ depends on $C_3,  \ C_4$ and $k_1$.
Using a similar technique as in Li-Yau paper \cite{[LY86]},  when $  t_0 > 0$, let $ y = \varphi  | \nabla f |^2 $ and $ z = \varphi ( f_t + af + \mathcal{R})$ to obtain $ \varphi^2  | \nabla f |^2 \leq \varphi y \leq y $,  $ y^{\frac{1}{2}}( y - \alpha z) = \frac{1}{t_0}  | \nabla f |  \varphi^{\frac{3}{2}} F $ and $ \varphi F = t_0 ( y - \alpha z )$. We get
%%%%%%%%%%%%%%%%%%%%%%%
%%%%%%%%%%%%%%%%%%%%%%% 
  \begin{align}\label{eqp6}
  \left. \begin{array}{l}
  \displaystyle   0 \geq  \frac{ 2 t_0^2}{n}   \Bigg(  \frac{ \alpha}{p}  ( y-z)^2  -\frac{C_8}{2}n y - \frac{n \sqrt{C_6}}{\rho} y^{\frac{1}{2}}(y- \alpha z) \Bigg)   \\
    \displaystyle \hspace{0.5cm}  - \frac{ \alpha  n q }{2} t^2_0 \widetilde{K} \varphi^2+ \Big( C_7  t_0 - 2 t_0 \frac{C_6}{\rho^2} - 1 + a t_0 \Big)( \varphi F).
    \end{array} \right. 
  \end{align}
 Notice by direct calculation that 
 \begin{equation*}
\left. \begin{array}{l}
 \displaystyle  ( y- z)^2  = \Big[ \frac{1}{\alpha}( y-  \alpha z) + \frac{\alpha-1}{\alpha}y \Big]^2 \\\ \\
 \displaystyle  \hspace{1.5cm} =  \frac{1}{\alpha^2}( y-  \alpha z)^2 +  \frac{(\alpha-1)^2}{\alpha^2} y^2 +  \frac{2(\alpha-1)}{\alpha^2} y( y-  \alpha z).
\end{array} \right.
\end{equation*}
 Then, the first term in the right hand side the inequality (\ref{eqp6}) can be simplified as follows: 
     %%%%%%%%%%%%%%%%%%%%%%%
%%%%%%%%%%%%%%%%%%%%%%%
 \begin{align*}
      \frac{ 2 t_0^2}{n}  & \Bigg \{  \frac{ \alpha}{p} \Bigg[ ( y-z)^2  - \frac{C_8 np}{ 2\alpha}  y   - \frac{n p}{\alpha}  \frac{ \sqrt{C_6}}{\rho}y( y - \alpha z)  \Bigg] \Bigg \}   \\
     & \hspace{.5cm} =   \frac{ 2 t_0^2}{n}   \Bigg \{  \frac{ \alpha}{p} \Bigg[  \frac{1}{\alpha^2} ( y - \alpha z)^2 +   \Bigg( \frac{( \alpha-1)^2}{\alpha^2} y^2   -\frac{C_8 np}{ 2\alpha}  y  \Bigg) \\  
      &  \hspace{.5cm} +   \Bigg( \frac{2(\alpha-1)}{\alpha^2} y - \frac{np}{\alpha}  \frac{\sqrt{C_6}}{\rho} y^{\frac{1}{2}} \Bigg)( y-  \alpha z) \Bigg] \Bigg \} \\
  & \hspace{.5cm} \geq   \frac{ 2 t_0^2}{n}   \Bigg \{   \frac{1}{\alpha p}( y-  \alpha z)^2 - \frac{C^2_8 \alpha n^2 p }{ 16( \alpha-1)^2 } 
          -  \frac{ C_6 \alpha n^2 p}{8  \rho^2 (\alpha-1)} ( y-  \alpha z ) \Bigg \}  \\
 & \hspace{.5cm} =  \frac{ 2}{\alpha np} ( \varphi F)^2   - \frac{C_8^2 \alpha n p}{8 (\alpha-1)^2 } t_0^2  - \frac{ C_6 \alpha n p}{4  \rho^2 (\alpha-1)} t_0 ( \varphi F ).\\
    \end{align*}
We have used the inequality of the form   $ ax^2 - b x \geq -\frac{b^2}{ 4a}, \  ( a, b > 0)$, to compute
\begin{align}
\displaystyle \frac{(\alpha-1)^2}{\alpha^2} y^2  - \frac{C^2_8np D}{2 \alpha} y &\geq - \frac{ C_8  n^2 p^2}{ 16 (\alpha-1)^2 } ,\\
 \displaystyle\frac{2(\alpha-1 )}{\alpha^2} y - \frac{np}{\alpha} \frac{\sqrt{C_2}}{\rho} y^{\frac{1}{2}} & \geq -  \frac{ C_6 n^2 p^2}{8(\alpha-1) \rho^2}.
\end{align}
 Therefore putting all these together into  (\ref{eqp6}), we get a quadratic polynomial in $( \varphi F)$
        %%%%%%%%%%%%%%%%%%%%%%%
%%%%%%%%%%%%%%%%%%%%%%%       
                   \begin{align*}
    0 \geq & \frac{ 2}{\alpha np} ( \varphi F)^2 + \Bigg( C_7  t_0 - 2 t_0 \frac{C_6}{\rho^2} - 1 + a t_0 - \frac{ C_6 \alpha n p}{4  \rho^2 (\alpha-1)} t_0 \Bigg) ( \varphi F )  \\
    &- \Bigg(  \frac{ C_8^2 \alpha n p}{ 8 (\alpha-1)^2 } t_0^2 + \frac{ \alpha  n q }{2} t^2_0 \widetilde{K} \varphi^2  \Bigg). 
         \end{align*}   
Then we develop a  formula  for quadratic inequality of the form $  ax^2 + b x + c \leq 0$,  for $x \in \mathbb{R}$. Note that  when   $ a > 0$ and $ c < 0$, then  $ b^2 - 4ac > 0$ and we have an upper  bound 
 \begin{align}\label{eqp9}
       x \leq  \frac{-b + \sqrt{b^2 - 4ac}}{ 2a}  \leq \frac{1}{a} \Big \{ - b + \sqrt{-ac} \Big \} . 
  \end{align}  
  The next is to make more explicit the term 
    \begin{align*}
    b &:=  \Big(C_7  t_0 - 2 t_0 \frac{C_6}{\rho^2} - 1 + a t_0 - \frac{ C_6 \alpha n p}{4  \rho^2 (\alpha-1)} t_0\Big)\\
     & = \Big( - \frac{C_6 \sqrt{k_1}}{\rho}t_0  - \frac{C_6 }{\rho^2}t_0   - \sqrt{C_6} \widetilde{K}t_0   - 2 t_0 \frac{C_6}{\rho^2} -1 + a t_0 - \frac{ C_6 \alpha n p}{4  \rho^2 (\alpha-1)} t_0\Big) \\
   &= - \Big(\frac{C_9}{\rho^2} t_0 \Big( \frac{ \alpha p}{\alpha - 1} + \rho \sqrt{k_1} + \rho^2 \widetilde{K} \Big) -a t_0 + 1 \Big), 
    \end{align*}
 where $C_9 >0$ depends on $C_6$ and $n$. Hence, we have by applying (\ref{eqp9})  
 %%%%%%%%%%%%%%%%%%%%%%%
%%%%%%%%%%%%%%%%%%%%%%%     
 \begin{align*}
 \varphi F & \leq  \frac{\alpha n  p}{2}  +  \frac{\alpha n  p}{2 } \Big \{ \frac{C_9}{\rho^2} t_0 \Big( \frac{ \alpha p}{\alpha - 1}  + \rho \sqrt{k_1}  +\rho^2( k_2 + k_3 )^2 \Big) - a t_0 \Big \} \\
 & +   \frac{ \alpha n p }{ 4(\alpha -1) } C_8 t_0  +\frac{\alpha n }{2} ( k_2 + k_3) t_0 \varphi \sqrt{pq}.
 \end{align*}
 To obtain the required bound on $F(x, \tau)$ for an appropriate range of $ x \in M$, we take $ \varphi(x, \tau) \equiv 1$ whenever $d( x , x_0, \tau ) < 2 \rho$ and since $( x_0, t_0)$ is the maximum point for $( \varphi F)$ in $ \mathcal{B}_{2\rho, T}$, we have 
 $$ F(x, \tau) = ( \varphi F)( x, \tau) \leq ( \varphi F)(x_0, t_0)$$
 for all $ x \in M$, such that $d( x , x_0, \tau ) < \rho$ and $ \tau \in ( 0, T]$ was arbitrarily chosen, then we have the conclusion in a more compact way, that 
  %%%%%%%%%%%%%%%%%%%%%%%
%%%%%%%%%%%%%%%%%%%%%%%     
  \begin{align}\label{eq511}
\displaystyle \sup_{ x \in \mathcal{B}_{2\rho}} \Big \{ | \nabla f |^2 - \alpha f_t  -  \alpha a f - \alpha \mathcal{R} \Big \} \leq \frac{\alpha np}{2 t} (1 - a t) + C_{10}, 
\end{align}
where $C_{10}$ depends on $ \alpha, \tau, \rho, k_1, k_2, k_3, n, p$ and $q$. This ends the proof of Theorem \ref{thm54}.
\end{proof}
 %%%%%%%%%%%%%%%%%%%%%%%
%%%%%%%%%%%%%%%%%%%%%%%   
\begin{Remark}\label{rmk31}
Global estimate follows by letting $ \rho \rightarrow \infty$ for all $ t > 0$. For instance, if we set $ p = 2 \alpha = q $ and allow  $ \rho$ goes to infinity, we have the estimate
 \begin{align}\label{eq512}
  \frac{| \nabla u|^2}{u^2} - \alpha \frac{ u_t}{u} -  \alpha a \log u - \alpha \mathcal{R}  \leq  \frac{\alpha^2 n  }{t}  + C_{11}
 \end{align}
where $C_{11}$ is an absolute constant depending on $n, \tau , \alpha$ and the upper bounds of $|Rc|, | \nabla \mathcal{R}|, $ $| \Delta \mathcal{R}|, |h|, | \nabla h|$ and $ - \min \{a, 0 \}$.
\end{Remark} 
 %%%%%%%%%%%%%%%%%%%%%%%
%%%%%%%%%%%%%%%%%%%%%%%   
As an application of the  global gradient estimates derived in Theorem \ref{thm54}, we obtain the followng result for the corresponding Harnack estimates.
\begin{corollary}\label{corN5}(Hanarck estimates).
With the same assumption as in Theorem \ref{thm54}. The following estimate
 \begin{align}\label{eq514}
\displaystyle \frac{u(x_1, t_1)}{u(x_2, t_2)^{e^{-a(t_2 -t_1)}}}  \leq  \Big(  \frac{t_2}{t_1}\Big)^{
\alpha n} \exp \Bigg \{ \int_0^1 \Bigg( \frac{ \alpha | \dot{\gamma}(s) |^2  + 4( t_2 - t_1 )^2 C_{12}}{ 4 (t_2 -  t_1)}    \Bigg) ds \Bigg \}
\end{align}
holds for  all $ (x, t) \in M \times (0, T]$, where $C_{12}$ is an absolute constant depending on $n, \tau , \alpha$ and the upper bounds of $|Rc|, |\mathcal{R}|, | \nabla \mathcal{R}|, $ $| \Delta \mathcal{R}|, |h|, | \nabla h|$ and $ - \min \{a, 0 \}$. 
The  space-time path $ \gamma :[t_1, t_2] \to M$ connects points $x_1= \gamma(t_1)$ and $x_2 = \gamma(t_2)$ in $M$. The norm $| \cdot|$ depends on $t$.
\end{corollary}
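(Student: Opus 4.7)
The plan is to integrate the global gradient estimate from Remark \ref{rmk31} along a space-time path. Setting $f = \log u$, the estimate (\ref{eq512}) rearranges to
\[ f_t \;\geq\; \frac{1}{\alpha}|\nabla f|^2 - af - \mathcal{R} - \frac{\alpha n}{t} - \frac{C_{11}}{\alpha} \]
on $M\times(0,T]$. Given a smooth space-time path $\gamma:[t_1,t_2]\to M$ with $\gamma(t_i)=x_i$, I would form $\phi(s) := f(\gamma(s),s)$ and compute $\phi'(s) = f_t + \langle\nabla f,\dot\gamma\rangle$. Applying the weighted Young inequality $|\nabla f||\dot\gamma| \leq \frac{|\nabla f|^2}{\alpha} + \frac{\alpha|\dot\gamma|^2}{4}$ to the inner-product term precisely cancels the $|\nabla f|^2/\alpha$ contribution, leaving the linear first-order differential inequality
\[ \phi'(s) + a\,\phi(s) \;\geq\; -\mathcal{R} - \frac{\alpha n}{s} - \frac{C_{11}}{\alpha} - \frac{\alpha|\dot\gamma(s)|^2}{4}. \]

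Next, I would multiply through by the integrating factor $e^{as}$, rewrite the left-hand side as $(e^{as}\phi(s))'$, and integrate over $[t_1,t_2]$. Dividing out by the appropriate exponential then produces a bound for $f(x_1,t_1) - e^{-a(t_2-t_1)}f(x_2,t_2)$ in terms of the integrated error. The term $\int_{t_1}^{t_2}\alpha n/s\,ds = \alpha n \log(t_2/t_1)$ exponentiates to the prefactor $(t_2/t_1)^{\alpha n}$; the remaining bounded terms $\mathcal{R}$, $C_{11}/\alpha$ (and the exponential weight $e^{as}$, absorbed after using $|a|(t_2-t_1)$ is controlled) collect into a single constant $C_{12}$.

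To match the unit-interval integral in the statement, I would then reparametrize the path on $[0,1]$ by setting $\eta(s)=\gamma(t_1+s(t_2-t_1))$; the rescaling gives $|\dot\gamma|^2\,dt \mapsto \tfrac{|\dot\eta(s)|^2}{t_2-t_1}\,ds$, producing the summand $\alpha|\dot\eta|^2/(4(t_2-t_1))$, and the constant contribution $C_{12}(t_2-t_1)$ rewrites as $\int_0^1 4(t_2-t_1)^2 C_{12}/(4(t_2-t_1))\,ds$. Exponentiating the resulting inequality for $f(x_1,t_1)-e^{-a(t_2-t_1)}f(x_2,t_2)$ then yields the claimed Harnack ratio.

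The main obstacle I expect is the careful bookkeeping introduced by the $au\log u$ nonlinearity: because the gradient estimate retains the linear term $-\alpha a f$, the natural integration is no longer along a straight heat-kernel Harnack but along the trajectories of a linear ODE, so the usual bound $f(x_1,t_1)-f(x_2,t_2)$ is replaced by the exponentially weighted difference $f(x_1,t_1)-e^{-a(t_2-t_1)}f(x_2,t_2)$. Managing the $e^{as}$ factor so that it either cancels or is uniformly absorbed into $C_{12}$ (using uniform bounds on $|\mathcal{R}|$ and $a$ over $[0,T]$) is the delicate point; everything else follows by standard Li-Yau-type manipulation as in \cite{[LY86]}.
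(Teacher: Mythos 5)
Your proposal follows essentially the same route as the paper: rearrange the global estimate (\ref{eq512}) into a lower bound for $f_t$, integrate $e^{at}f(\gamma(t),t)$ along the path, cancel the gradient term via the Young/completing-the-square inequality $\tfrac{1}{\alpha}|\nabla f|^2+\langle\nabla f,\dot\gamma\rangle\ge-\tfrac{\alpha|\dot\gamma|^2}{4}$, absorb $\mathcal{R}$, $C_{11}$ and the exponential weight into $C_{12}$, reparametrize to $[0,1]$, and exponentiate. The only point to watch is the sign in the weighted difference: the integrating factor $e^{at}$ actually produces $f(x_1,t_1)-e^{a(t_2-t_1)}f(x_2,t_2)$ rather than $e^{-a(t_2-t_1)}$, which is the same statement-versus-proof discrepancy already present in the paper, and like the paper you leave the absorption of the $e^{a(t-t_1)}$ weight (which in principle perturbs the $(t_2/t_1)^{\alpha n}$ prefactor when $a>0$) at the heuristic level.
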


\proof ({\bf of Corollary \ref{corN5}}).
Equation (\ref{eq512}) implies
$$ f_t \geq \frac{1}{\alpha} |\nabla f|^2 - \frac{\alpha n  }{t}  - a f - \mathcal{R} - \frac{1}{\alpha}  C_{11}.$$
Straight computation yields
\begin{align*}
e^{at_2} & f(x_2, t_2) -  e^{a t_1} f(x_1, t_1) = \int_{t_1}^{t_2} \frac{d}{dt} \Big( e^{at} f( \gamma(t), t) \Big) dt\\
& =  \int_{t_1}^{t_2} \Big \{ e^{at} (f_t + \langle \nabla f( \gamma(t), t), \dot{\gamma}(t) \rangle) + a  e^{at} f \Big \}\\
& \geq  \int_{t_1}^{t_2} \Big \{ e^{at} \Big( \frac{1}{\alpha} |\nabla f|^2 - \frac{\alpha n  }{t}  - \mathcal{R} - \frac{1}{\alpha}  C_{11} + \langle \nabla f( \gamma(t), t), \dot{\gamma}(t) \rangle \Big)\Big \}\\
& \geq - e^{at} \Big( \int_{t_1}^{t_2} \frac{\alpha | \dot{\gamma}(t) |^2}{4(t_2 - t_t)} dt  + \log \Big(\frac{t_2}{t_1}\Big)^{\alpha n} + C_{12}(t_2 - t_1) \Big),
\end{align*}
where we have used inequality of the form $Ay^2 + B y \geq - B^2 / 4A.$ Positive $C_{12}$ depends on $\alpha, C_{11}$ and the uniform bound for $|\mathcal{R}|$. 
 Multiplying both sides by $e^{-a t_1}$ we have 
\begin{align*}
 f(x_1, t_1)  - e^{a(t_2-t_1)} f(x_2, t_2) = \log \Bigg(\frac{u(x_1, t_1)}{u(x_2, t_1)^{e^{a(t_2-t_1)}}} \Bigg).
\end{align*} 
By exponentiation we arrive at 
\begin{align*}
u(x_1, t_1) \leq u(x_2, t_1)^{e^{a(t_2-t_1)}} \Big(\frac{t_2}{t_1}\Big)^{\alpha n} \exp \Big \{  \int_{t_1}^{t_2} \frac{\alpha | \dot{\gamma}(t) |^2}{4(t_2 - t_t)} dt + C_{12}(t_2 - t_1) \Big \},
\end{align*}
which concludes the proof of the corollary.
\qed

\section*{Acknowledgements}

The author wishes to thank  the anonymous referees for their useful comments. 
His research is supported by TETFund of Federal Government of Nigeria and University of Sussex, United Kingdom.

% ------------------------------------------------------------------------
\end{document}